\newcommand{\Var}{{\rm{Var}_{\mathbb{C}}}}
\newcommand{\diag}{{\mbox{\rm diag}}}
\def\1{\underline{1}}
\def\R{{\mathbb R}}
\def\AA{{\mathbb A}}
\def\LLL{{\mathbb L}}
\def\Z{{\mathbb Z}}
\def\X{{\mathbb X}}
\def\Q{{\mathbb Q}}
\def\C{{\mathbb C}}
\def\phiphi{{\underline{\varphi}}}
\def\rr{{\underline{r}}}
\newtheorem{theorem}{Theorem}
\newtheorem{lemma}{Lemma}
\newtheorem{proposition}{Proposition}
\newenvironment{definition}
{\smallskip\noindent{\bf Definition\/}:}{\smallskip\par}
\newenvironment{remark}
{\smallskip\noindent{\bf Remark\/}.}{\smallskip\par}
\newenvironment{proof}
{\noindent{\bf Proof\/}.}{{ $\square$}\smallskip\par}
\title{Equivariant versions of higher order orbifold Euler characteristics 
\footnote{Math. Subject Class.: 55M35, 32Q55, 19A22. Keywords: 
finite group actions, orbifold Euler characteristic, Burnside ring,
complex quasi-projective varieties, wreath products, generating series.}
}
\author{S.M.~Gusein-Zade \thanks{The work of the first author
(Sections~\ref{secIntro}, \ref{secPower} and~\ref{secEqui}) was supported
by the grant 16-11-10018 of the Russian Science Foundation.
Address: Moscow State University, Faculty
of Mathematics and Mechanics, GSP-1, Moscow, 119991, Russia. E-mail:
sabir\symbol{'100}mccme.ru} \and I.~Luengo  \and
A.~Melle-Hern\'andez \thanks{The last two authors were partially
supported by the grant MTM2013-45710-C02-02-P.
Address: ICMAT (CSIC-UAM-UC3M-UCM); Complutense University of Madrid,  Dept. of Algebra, Madrid, 28040, Spain.
E-mail: iluengo\symbol{'100}mat.ucm.es, amelle\symbol{'100}mat.ucm.es}}
\date{}
\begin{document}
\def\eps{\varepsilon}

\maketitle

\begin{abstract}
There are (at least) two different approaches to define an equivariant analogue of the Euler charateristic
for a space with a finite group action. The first one defines it 
as an element of the Burnside ring  of the group. The second approach emerged from physics  and includes the  orbifold Euler
characteristic and its  higher order versions. Here we give a way to merge the two approaches together defining 
(in a certain setting)
higher order Euler characteristics with values in the Burnside ring of a group. We give Macdonald
type equations for these invariants. We also offer generalized (``motivic'') versions of these
invariants and formulate Macdonald type equations for them as well.
\end{abstract}

\section{Introduction}\label{secIntro}

Let $X$ be a topological space (good enough, say, a real subanalytic variety) with an action of
a finite group $G$. There are (at least) two different approaches to define an equivariant analogue
of the Euler charateristic for the pair $(X,G)$. The first one (\cite{TtD}) defines the
{\em equivariant Euler characteristic} $\chi^G(X)$ as an element of the Burnside ring $A(G)$
of the group $G$. The second approach emerged from physics (the string theory of orbifolds: \cite{Vafa},
\cite{Vafa2}).
The {\em orbifold Euler characteristic} $\chi^{orb}(X,G)$ is defined through the fixed point sets
of some subgroups of $G$ and is an integer. {\em Higher order (orbifold) Euler characteristics}
were introduced in \cite{AS} and \cite{BF} (also as integers). They can be defined through
the fixed point sets of collections of commuting elements in $G$. Here we give a way to merge
the two approaches together.

Through this paper we consider the Euler characteristic $\chi(\cdot)$ defined as the alternating
sum of the dimensions of the cohomology groups with compact support. This Euler characteristic
is not a homotopy invariant in the usual sense, but an invariant of the homotopy type defined
in terms of proper maps. It is an additive function on the algebra of (``good'') spaces.
For complex quasi-projective varieties this Euler characteristic coincides with the ``usual'' one,
i.~e., with the alternating sum of the dimensions of the usual cohomology groups.

There is the universal additive invariant on the algebra of complex constructible sets.
It takes values in the Grothendieck ring $K_0(\Var)$ of complex quasi-projective
varieties and can be regarded as a {\em generalized} (``motivic'') {\em Euler characteristic} $\chi_g(X)$.
There were defined generalized (``motivic'') analogues of the orbifold Euler characteristic and of
its higher order generalizations. First it was essentially made in \cite{BatDais} (for the Hodge-Deligne
polynomial) and then formulated precisely in \cite{GLMSteklov} and \cite{G&Ph}.
These {\em higher order generalized Euler characteristics} take values in a certain modification
of the Grothendieck ring $K_0(\Var)$ of complex quasi-projective varieties.

The Euler characteristic satisfies the {\em Macdonald type equation}
\begin{equation}\label{mac-simple}
1+\sum_{n=1}^{\infty}\chi(S^nX) \cdot t^n=(1-t)^{-\chi(X)}\,,
\end{equation}
where $S^nX=X^n/S_n$ is the $n$th symmetric power of $X$ (see, e.g., \cite{mac}). A Macdonald type
equation for a given invariant expresses the generating series of the values of the invariant
for the symmetric powers of a space (or for their analogues) as a series not depending on the space
(in this case $(1-t)^{-1}$) with the exponent equal to the value of the invariant for the space itself.
If the invariant takes values in a ring $R$ different from a number ring (i.e., from $\Z$, $\Q$, $\R$ or
$\C$), Macdonald type equations can be formulated in terms of a so-called power structure over the ring.
There are Macdonald type equations for the orbifold Euler characteristic and for its higher order
analogues (\cite{Wang}, \cite{T}) and also for the equivariant Euler characteristic with values in
the Burnside ring $A(G)$ of $G$ (see Lemma~\ref{lemma1} below). 
An analogue of these equations for the generalized (``motivic'') higher order Euler
characteristics was obtained in \cite{G&Ph}. It was formulated in terms of the (natural) {\em power
structure} over the Grothendieck ring $K_0(\Var)$ of complex quasi-projective varieties: \cite{GLM-MRL}.
(Its version for the orbifold Hodge-Deligne polynomial was proved in \cite{WZ}.)

Here we  define 
higher order Euler characteristics with values in the Burnside ring of a group. 
This is made in the setting when there are two commuting finite group actions. Then one of them can be
treated in the way similar to ``the orbifold aproach'' and the other in the way which leads to invariants
(say, equivariant Euler characteristic) with values in the Burnside ring of the group. This situation
can be met, for example, in the following considerations. Assume that $(f,G)$ is a pair consisting of
a quasi-homogeneous polynomial $f$ and an abelian group $G$ of its (diagonal) symmetries. Such pairs are
subjects of analysis, in particular, in the Berglund-H\"ubsch-Henningson mirror symmetry \cite{BH1},
\cite{BH2}. The classical monodromy transformation of $f$ is a map of a finite order from the
Milnor fibre of $f$ into itself. It commutes with the $G$-action. Thus one has the action of
two groups: the group $G$ and the (cyclic) group generated by the monodromy transformation.
A study of the orbifold monodromy zeta function of $f$ leads to the situation when one should apply
``the orbifold approach'' to the first action whence the other one should be treated in another way,
see, e.~g., \cite{Orbifoldzeta}.
We give Macdonald type equations for the constructed invariants. We also offer generalized (``motivic'')
versions of these invariants and formulate Macdonald type equations for them as well.

\section{Power structures over rings}\label{secPower}
As it was indicted above, a Macdonald type equation for an invariant can be formulated in terms of
a power structure over the ring of values of the invariant. Let $R$ be a commutative associative ring
with unity. A power structure over the ring $R$ gives sense to expressions of the form $(A(t))^m$,
where $A(t)=1+a_1t+a_2t^2+\ldots$ is a power series with the coefficients $a_i$ from $R$
and $m$ is an element of $R$.

\begin{definition}
 A {\em power structure} over the ring $R$ is a map $\left(1+tR[[t]]\right)\times R\to 1+tR[[t]]$
 ($(A(t),m)\mapsto\left(A(t)\right)^m$) possessing the following properties:
 \begin{enumerate}
  \item[1)] $(1+a_1t+\ldots)^m=1+ma_1t+\ldots$\,;
  \item[2)] $\left(A(t)B(t)\right)^m=\left(A(t)\right)^m\left(B(t)\right)^m$;
  \item[3)] $\left(A(t)\right)^{m+n}=\left(A(t)\right)^m\left(A(t)\right)^n$;
  \item[4)] $\left(A(t)\right)^{mn}=\left(\left(A(t)\right)^m\right)^n$.
 \end{enumerate}
\end{definition}

Let $\mathfrak{m}$ be the ideal $tR[[t]]$ in the ring $R[[t]]$.

\begin{definition}
 A power structure over the ring $R$ is {\em finitely determined} if the fact that
 $A(t)\in 1+{\mathfrak{m}}^k$ implies that $\left(A(t)\right)^m\in 1+{\mathfrak{m}}^k$.
\end{definition}

The natural power structure over the ring $\R$ of integers is defined by the standard formula
for a power of a series (see, e.~g., \cite{St}):
\begin{eqnarray*}
 &\ &(1+a_1t+a_2t^2+\ldots)^m=\\
 &=&1+\sum_{k=1}^{\infty}
 \left(\sum_{\{k_i\}:\sum ik_i=k}\frac{m(m-1)\cdots(m-\sum_i k_i +1)\cdot\prod_i a_i^{k_i}}
 {\prod_i k_i!}\right)\cdot t^k.
\end{eqnarray*}

An important example of a power structure over the Grothendieck ring of quasi-projective varieties
was introduced in \cite{GLM-MRL}.

A quasi-projective variety is the complement of a projective variety in a projective one.
The Grothendieck ring $K_0(\Var)$ of complex quasi-projective varieties is the abelian group
generated by the classes  of complex quasi-projective varieties modulo the relations:
\begin{enumerate}
 \item[1)] if $X$ and $X'$ are isomorphic, then $[X]=[X']$;
 \item[2)] if $Y\subset X$ is a Zariski closed subset, then $[X]=[Y]+[X\setminus Y]$.
\end{enumerate}
The multiplication in $K_0(\Var)$ is defined by the cartesian product. The class $\LLL=[\AA_{\C}^1]$
of the complex affine line plays a special role in a number of constructions connected with the
Grothendieck ring $K_0(\Var)$. (In \cite{Borisov} it was shown that $\LLL$ is a divisor of zero
in $K_0(\Var)$.)

A power structure over the ring $K_0(\Var)$ is defined in \cite{GLM-MRL} by the formula
\begin{eqnarray}\label{geometric}
 &\ &(1+[A_1]t+[A_2]t^2+\ldots)^{[M]}=\nonumber\\
 &=&1+\sum_{k=1}^{\infty}
 \left(\sum_{\{k_i\}:\sum ik_i=k}
 \left[\left(\left(M^{\sum_i k_i}\setminus\Delta\right)\times\prod_i A_i^{k_i}\right)\left/
 {\prod_i S_{k_i}}\right.\right]\right)\cdot t^k,\label{Power}
\end{eqnarray}
where $A_i$, $i=1, 2, \ldots$, and $M$ are quasi-projective varieties ($[A_i]$ and $[M]$ are
their classes in the ring $K_0(\Var)$), $\Delta$ is the large diagonal in $M^{\sum_i k_i}$,
i.~e., the set of (ordered) $\left(\sum_i k_i\right)$-tuples of points of $M$ with at least
two coinciding ones, the group $S_{k_i}$ of permutations on $k_i$ elements acts by the
simultaneous permutations on the components of the corresponding factor $M^{k_i}$ in
$M^{\sum_i k_i}=\prod_i M^{k_i}$ and on the components of $A_i^{k_i}$.

One can see that the coefficient at $t^k$ in the right hand side of~(\ref{Power}) has the
following interpretation. Let $I:\coprod\limits_{i=1}^{\infty}A_i\to\Z$ be the ``tautological
function'' on the disjoint union $\coprod\limits_{i=1}^{\infty}A_i$ sending $A_i$ to $i$.
The coefficient at $t^k$ in the right hand side of (\ref{Power}) is represented by the configuration
space of pairs $(K,\psi)$, where $K$ is a finite subset of $M$ and $\psi$ is a map
$K\to \coprod\limits_{i=1}^{\infty}A_i$ such that $\sum\limits_{x\in K}I(\psi(x))=k$. This interpretation
makes it much easier to prove that the equation~(\ref{Power}) really defines a power structure (i.~e.,
to verify the properties 2)-4) from the definition) and also permits to prove some formulae
for generating series of classes of Hilbert schemes of ``fat points'' (zero-dimensional subschemes)
on quasi-projective manifolds (see, e.~g., \cite{Michigan}). Less formally, see~\cite{Gorsky},
\cite{BrMor}, one can say that on the variety $M$ there live particles equipped with some natural
numbers (multiplicities, masses, charges, \dots). A particle of multiplicity $n$ has a complicated
space of internal states which is parametrized by points of a quasi-projective variety $A_n$
and the coefficient at $t^k$ is the configuration space of tuples of particles with the total
multiplicity $k$.

The notion of a power structure over a ring is closely related with the notion of a $\lambda$-ring
structure. A $\lambda$-ring structure (or a pre-$\lambda$-ring structure in a certain terminology)
is an additive-to-multiplicative homomorphism $R \to 1+tR[[t]], a\mapsto \lambda_a(t)$ such that
$\lambda_a(t)=1+at +\ldots$, where the term {\em additive-to-multiplicative} means that
$\lambda_{a+b}(t)=\lambda_a(t) \cdot \lambda_b(t)$.
A $\lambda$-ring structure over a ring defines a finitely determined power structure over it
in the following way. Any series $A(t)\in 1+tR[[t]]$ can be in  a unique way
represented as $\prod_{i=1}^{\infty}\lambda_{b_i}(t^i)$, for some $b_i\in R$.
Then one defines  $\left(A(t)\right)^m:=\prod_{i=1}^{\infty}\lambda_{m b_i}(t^i).$
On the other hand, in general, there are many $\lambda$-ring structures corresponding to one
power structure over a ring. One can show that the power strucure~(\ref{geometric}) is defined by
the $\lambda$-ring structure on the Grothendieck ring $K_0(\Var)$ given by the Kapranov zeta function 
$$
\zeta_{[M]}(t):=1+\sum_{n=1}^\infty [S^n M] \cdot t^n,
$$
where $S^n M$ is the n-th symmetric power of the variety $M$.
This follows for the following equation
$$
\zeta_{[M]}=(1-t) ^{-[M]}=(1+t+t^2+\ldots)^{[M]}.
$$


{\em Burnside ring} $A(G)$ of a finite group $G$ is the Grothendieck ring of finite $G$-sets:
see, e.g., \cite{Handbook}). As an abelian group the Burnside ring $A(G)$ is freely generated by
the classes $[G/H]$ of the quotients $G/H$ for representatives $H$ of the conjugacy classes $\mathfrak{h}$
of subgroups of $G$. The multiplication is defined by the cartesian product with the diagonal action
of $G$. There is a natural power structure over the Burnside ring $A(G)$: see, e.g., \cite{ArnoldJM}.
This power structure is defined in a way similar to that one over the ring $K_0(\Var)$. Namely, if
$A_i$, $i=1, 2, \ldots$, and $M$ are finite $G$-sets, the same equation~(\ref{Power}) defines the
series $(1+[A_1]t+[A_2]t^2+\ldots)^{[M]}$ where the action of the group $G$ on the summands is
the natural (the diagonal) one. In particular, for a finite $G$-set $X$ one has
$$
 (1-t)^{-[X]}=1+[X]t+[S^2X]t^2+[S^3X]t^3+\ldots\,,
$$
where $S^kX=X^k/S_k$ is the $k$th symmetric power of the $G$-set $X$ with the diagonal action of $G$.

Let $X$ be a $G$-space. For a point $x\in X$, let $G_x=\{g\in G:\, g\cdot x=x \}$ be the isotropy subgroup
of the point $x$. For a subgroup $H\subset G$, let $X^H=\{x\in X: Hx=x\}$ be the fixed point set of $H$
($X^H=\{x\in X: H\subset G_x\}$) and let $X^{(H)}=\{x\in X: G_x=H\}$ be the set of points with the isotropy group $H$.
Let ${\rm Conjsub\,} G$ be the set of the conjugacy classes of subgroups of $G$.
For a conjugacy class ${\mathfrak{h}}\in {\rm Conjsub\,} G$, let
$X^{\mathfrak{h}}=\{x\in X: x\in X^H \mbox{\ for a subgroup\ } H\in\mathfrak{h}\}$,
$X^{(\mathfrak{h})}=\{x\in X: G_x\in \mathfrak{h}\}$.

The  {\em equivariant Euler characteristic} of a (good enough) $G$-space $X$ is defined by
\begin{equation}\label{equiEuler}
\chi^G(X):=\sum_{\mathfrak{h}\in {\rm Conjsub\,}G}\chi(X^{(\mathfrak{h})}/G)[G/H]\in A(G)\,,
\end{equation}
where $H$ is a representative of the conjugacy class $\mathfrak{h}$ (see, e.~g., \cite{TtD}).

\section{Higher order Euler characteristics \& Macdonald type equations}\label{secHigher}
The {\em orbifold Euler characteristic}
$\chi^{orb}(X,G)$ of the $G$-space $X$ is defined, e.g., in \cite{AS}, \cite{HH}:
\begin{equation}\label{chi-orb}
 \chi^{orb}(X,G)=
\frac{1}{\vert G\vert}\sum_{{(g_0,g_1)\in G\times G:}\atop{\\g_0g_1=g_1g_0}}\chi(X^{\langle g_0,g_1\rangle})
=\sum_{[g]\in {G_*}} \chi(X^{\langle g\rangle}/C_G(g))\in \Z\,,
\end{equation}
where $G_*$ is the set of the conjugacy classes of elements of $G$, $g$ is a representative of the class ${[g]}$,
$C_G(g)=\{h\in G: h^{-1}gh=g\}$
is the centralizer of $g$, and $\langle g\rangle$ and $\langle g_0,g_1\rangle$ are the subgroups of $G$ generated
by the corresponding elements.

The {\em higher order Euler characteristics} of $(X,G)$ are defined by:
\begin{equation}\label{chi-k-orb}
 \chi^{(k)}(X,G)=
\frac{1}{\vert G\vert}\sum_{{{\bf g}\in G^{k+1}:}\atop{g_ig_j=g_jg_i}}\chi(X^{\langle {\bf g}\rangle})
=\sum_{[g]\in G_*} \chi^{(k-1)}(X^{\langle g\rangle}, C_G(g))\,,
\end{equation}
where ${\bf g}=(g_0,g_1, \ldots, g_k)$, $\langle{\bf g}\rangle$ is the subgroup 
generated by $g_0,g_1, \ldots, g_k$, and (for the second, recurrent, definition)
$\chi^{(0)}(X,G)$ is defined as the usual Euler characteristic $\chi(X/G)$ of the quotient.
The orbifold Euler characteristic $\chi^{orb}(X,G)$
is the Euler characteristic $\chi^{(1)}(X,G)$ of order $1$.

For a $G$-space $X$, the cartesian power $X^n$ carries the natural action of the wreath product
$G_n=G\wr S_n=G^n \rtimes S_n$ generated by the natural action of the symmetric group $S_n$ (permutting the factors)
and by the natural (componentwise) action of the cartesian power $G^n$.
The pair $(X_n, G_n)$ should be (or can be) considered as an analogue of the symmetric power of the pair $(X,G)$.
One has the following Macdonald type equation (see \cite[Theorem A]{T})
\begin{equation}\label{higher_generating}
\sum_{n\ge 0}\chi^{(k)}(X^n, G_n)\cdot  t^n
=\left(\prod\limits_{r_1, \ldots,r_k\geq 1}\left(1-t^{r_1r_2\cdots r_k}\right)^{r_2r_3^2\cdots r_k^{k-1}}\right)
^{-\chi^{(k)}(X, G)}\,.
\end{equation}
When $k=0$,  one gets the equation~(\ref{mac-simple}) for the quotient $X/G$.

The equation~(\ref{higher_generating}) can be interpreted in the following way. Set 
$$
\X^{(k)}_{(X,G)}(t):=\sum_{n\ge 0}\chi^{(k)}(X^n, G_n)\cdot  t^n.
$$
For the natural action of the group $G$ on $G$ (regarded as a zero-dimensional finite space)
one has $\chi^{(k)}(G, G)=1$. Therefore
$$
\X^{(k)}_{(G,G)}(t)=\left(\prod\limits_{r_1, \ldots,r_k\geq 1}\left(1-t^{r_1r_2\cdots r_k}\right)^{r_2r_3^2\cdots r_k^{k-1}}\right)
^{-1}
$$
and thus
$$
\X^{(k)}_{(X,G)}(t)=\left(\X^{(k)}_{(G,G)}(t)\right)^{\chi^{(k)}(X, G)}.
$$

Let $K_0(\Var)[\LLL^s]_{s\in\Q}$ be the modification of the Grothendieck ring $K_0(\Var)$ obtained by
adding all rational powers of $\LLL$. (This includes $\LLL^{-1}$ and thus $K_0(\Var)[\LLL^s]_{s\in\Q}$ 
contains the localization $K_0(\Var)_{(\LLL)}$ of the ring $K_0(\Var)$ by $\LLL$. Pay attention
that $\LLL$ is a zero divisor in $K_0(\Var)$ (\cite{Borisov}) and therefore the natural map
$K_0(\Var)\to K_0(\Var)_{(\LLL)}$ is not injective.
Therefore the natural map $K_0(\Var)\to K_0(\Var)[\LLL^s]_{s\in\Q}$ is not injective as well.) 
A power structure on the ring
$K_0(\Var)[\LLL^s]_{s\in\Q}$ can be defined as a sort of an extension of that on $K_0(\Var)$
using the following equations:
\begin{enumerate}
 \item[1)] $\left( A (\LLL^s t)\right)^{[M]}= \left( A (t)\right)^{[M]}|_{t\mapsto \LLL^s t}$\,;
 \item[2)] $\zeta_{\LLL^s  [M]}(t)=\zeta_{[M]}(\LLL^s t)$.
\end{enumerate}
The second equation permits to define $\left(A(t)\right)^{\LLL^s  [M]}.$
One has to represent $A(t)$ as $\prod_{i=1}^\infty \zeta_{b_i}(t^i)$, $b_i\in K_0(\Var)$, and then
to define $\left(A(t)\right)^{\LLL^s [M]}$ by
$$
\left(A(t)\right)^{\LLL^s  [M]}:=\prod_{i=1}^\infty \zeta_{b_i[M]}(\LLL^s t^i).
$$

\begin{remark}
 If, in the considerations bellow, one uses only non-negative weights $\varphi_i$, one can work
 with the ring $K_0(\Var)[\LLL^s]_{s\in\Q_{\geq 0}}$. The natural map 
 $K_0(\Var)\to K_0(\Var)[\LLL^s]_{s\in\Q_{\geq 0}}$ is injective. 
\end{remark}

Now let $X$ be a smooth quasi-projective variety of dimension $d$ with an (algebraic) action of
the group $G$. To define the  {\em higher order generalized (orbifold) Euler characteristics}
of the pair $(X,G)$, one has to use the so called age (or fermion shift) $F_x^g$ of an element
$g\in G$ at a fixed point $x$ of $g$ defined in \cite{Zaslow}, \cite{Ito-Reid}.
The element $g$ acts on  the tangent space $T_xX$ as an automorphism of finite order.  This action
on $T_xX$ can be represented by a diagonal matrix
$\diag(\exp(2\pi i \theta_1), \ldots, \exp(2\pi i \theta_d))$
with $0\le\theta_j<1$ for $j=1,2, \ldots, d$ ($\theta_j$ are rational numbers). The {\em age}
of the element $g$ at the point $x$
is defined by $F^{g}_{x}=\sum_{j=1}^{d}\theta_j\in\Q_{\ge 0}$. 

For $g\in G$, let the number of $C_G(g)$-orbits in the the set of connected components of the
fixed point set  $X^{\langle g\rangle}$ be equal to $N_g$, and let
$X^{\langle g\rangle}_1$, $X^{\langle g\rangle}_2$, \ldots, $X^{\langle g\rangle}_{N_g}$
be the unions of the components of each of the orbits. 
For $1\leq \alpha_g \leq N_g$, let $F^{g}_{\alpha_g}$ be the age $F^{g}_{x}$ at a point of
$X^{\langle g\rangle}_{\alpha_g}$ (this age does not depend on the point $x\in X^{\langle g\rangle}_{\alpha_g}$).

For a rational number $\varphi_1\in \Q$, the generalized orbifold Euler characteristic
of weight $\varphi_1$ of the pair $(X,G)$ is defined by
\begin{equation}\label{generalized_orbifold}
[X,G]_{\varphi_1}:=\sum_{[g]\in G_*}\sum_{\alpha_g=1}^{N_g}
[X^{\langle g\rangle}_{\alpha_g}/C_G(g)]\cdot\,\LLL^{\varphi_1 F^{\langle g\rangle}_{\alpha_g}}\in 
K_0(\Var)[\LLL^{s}]_{s\in \Q}\,.
\end{equation}

For $\varphi_1=1$ one gets the definition of the generalized orbifold Euler characteristic from \cite{GLMSteklov}
inspired by the definition of the orbifold Hodge-Deligne polynomial from \cite{BatDais}. (This generalized
orbifold Euler characteristic maps to the orbifold Hodge-Deligne polynomial by the natural ring homomorphism
$e:K_0(\Var)\to\Z[u,v]$.) For $\varphi_1=0$ one gets the so called inertia stack class: see, e.g., \cite{FLNU}.

For a subgroup $H\subset G$, and for  an   $H$-invariant  submanifold $Y\subset X$, let us define  $[Y,H]_{X,\varphi_1}$ by
\begin{equation}\label{generalized_orbifold_inX}
[Y,H]_{X,\varphi_1}:=\sum_{[g]\in H_*}\sum_{\alpha_g=1}^{N_g}
[Y^{\langle g\rangle}_{\alpha_g}/C_H(g)]\cdot\,\LLL^{\varphi_1 F^{\langle g\rangle}_{\alpha_g}}\in 
K_0(\Var)[\LLL^{s}]_{s\in \Q}\,,
\end{equation}
where $Y^{\langle g\rangle}_{\alpha_g}=X^{\langle g\rangle}_{\alpha_g}\cap Y$,   $F^{\langle g\rangle}_{\alpha_g}$ is the age of $g$ at a point  of  
$X^{\langle g\rangle}_{\alpha_g}$. (Pay attention  that the age of an element $g$ is determined from its action on $T_{x} X$.)

Let  $\underline{\varphi}=(\varphi_1,\varphi_2,\ldots)$ be a fixed sequence of rational numbers.
The {\em generalized (orbifold) Euler characteristics of order} $k$ of weight 
$\underline{\varphi}$ of the pair $(X,G)$ 
is defined recursively by
\begin{equation}\label{higher_generalized_orbifold}
 [X,G]^k_{\underline{\varphi}}:=\sum_{[g]\in G_*}
\sum_{\alpha_g=1}^{N_g}[X^{\langle g\rangle }_{\alpha_g}, C_G(g)]^{k-1}_{X,\underline{\varphi}}
\cdot\,\LLL^{\varphi_k F^{\langle g\rangle }_{\alpha_g}}\in 
K_0(\Var)[\LLL^{s}]_{s\in \Q}\,,
\end{equation}
where, for an $H$-invariant submanifold $Y\subset X$ ($H\subset G$),   $[Y,H]^1_{X,\underline{\varphi}}:=[Y,H]_{X,\varphi_1}$ is defined by  (\ref{generalized_orbifold_inX}).
(Alternatively one can start from $k=0$ using the definition $[Y,H]^0_{X,\underline{\varphi}}=[Y,H]^0=[Y/H]$.)

One has the following Macdonald type equations (\cite{G&Ph})
\begin{equation}
\sum_{n\ge 0}[X^n, G_n]_{\phiphi}^{k}\cdot t^n
=\left(\prod\limits_{r_1, \ldots,r_k\geq 1}\left(1-\LLL^{\Phi_k(\rr)d/2}\cdot t^{r_1r_2\cdots r_k}\right)^{r_2r_3^2\cdots r_k^{k-1}}\right)
^{-[X, G]_{\phiphi}^{k}}\,,
\end{equation}
where 
$$\Phi_k(r_1,\ldots,r_k)=\varphi_1(r_1-1)+\varphi_2r_1(r_2-1)+\ldots+
\varphi_kr_1r_2\cdots r_{k-1}(r_k-1).$$

\section{Equivariant higher order Euler characteristics}\label{secEqui}

Assume that $X$ is a (good enough) topological space with
\emph{commuting} actions of two finite groups $G_{O}$ and $G_{B}$
(or equivalently with an action of the product $G_{O}\times G_{B}$).
The quotient $X/G_O$ carries the natural $G_B$-action 
and thus one can define 
$\chi^{(0,G_B)}(X; G_O,G_B)$ as $\chi^{G_B}(X/G_O)\in A(G_{B})$.

For an element $g\in G_O$,  the fixed point set  $X^{\langle g\rangle}$
is $G_B$-invariant and the quotient $X^{\langle g\rangle}/C_{G_O}(g)$
by the centralizer $C_{G_O}(g)$ carries the natural $G_B$-action. 

\begin{definition}
 The \emph{equivariant orbifold  Euler characteristics} of $(X; G_O,G_B)$ is 
\begin{eqnarray}
\hskip-10pt\chi^{(1,G_B)}(X; G_O,G_B)&:=&\sum_{[g]\in {G_{O*}}} \chi^{G_B}(X^{\langle g\rangle}/C_{G_O}(g))
\nonumber\\
&{\ =}&\sum_{[g]\in G_{O*}} \chi^{(0,G_B)}(X^{\langle g\rangle}; C_{G_O}(g),G_B)\in A(G_B)\,.
\end{eqnarray}
\end{definition}

The  equivariant higher order Euler characteristics
are defined recursively in the same way as the non-equivariant one.

\begin{definition}
 The \emph{equivariant Euler characteristics of order} $k$ of $(X; G_O,G_B)$ is 
\begin{equation}\label{discrite-equiv}
\chi^{(k,G_B)}(X; G_O,G_B):=
\sum_{[g]\in {G_O}_*} \chi^{(k-1, G_B)}(X^{\langle g\rangle}; C_{G_O}(g), G_B)\in A(G_B).
\end{equation}
\end{definition}

Definition (\ref{discrite-equiv}) is convinient for the proof of Theorem~\ref{discrite-ver}.
Iterating (\ref{discrite-equiv}) one gets the following equation for the equivariant
higher order Euler characteristics:
\begin{equation} \label{part-2}
  \chi^{(k,G_B)}(X; G_O,G_B)=
\sum_{[\phi] \in {\rm Hom\,}(\Z^k, G_O)/G_O}  \chi^{G_B} (X^{\langle \phi \rangle}/ C_{G_O}(\phi))\,, 
\end{equation}
where the group $G_O$ acts on ${\rm Hom\,}(\Z^k, G_O) $
by conjugation, $X^{\langle \phi \rangle}$ is the fixed point set of the image of $\phi$,  
$C_{G_O}(\phi)=\{g\in G_O: g^{-1}\phi g=\phi \}$.

\begin{theorem}\label{discrite-ver}
One has
\begin{equation}\label{eq_theo1}
\sum_{n\ge 0} \chi^{(k,G_B)}(X^n; (G_O)_{n},G_B)\cdot t^n
=\left(\prod\limits_{r_1, \ldots,r_k\geq 1}\left(1-t^{r_1r_2\cdots r_k}\right)^{r_2r_3^2\cdots r_k^{k-1}}\right)
^{-\chi^{(k,G_B)}(X; G_O,G_B)}\,,
\end{equation}
where the exponent in the right hand side is defined by the power structure over the Burside ring $A(G_B)$.
\end{theorem}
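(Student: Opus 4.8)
The plan is to run the argument that proves the non-equivariant and motivic Macdonald equations~(\ref{higher_generating}) (from \cite{T}, \cite{G&Ph}), carrying the commuting $G_B$-action along passively, and to reduce the whole identity to the order-zero equivariant Macdonald equation (Lemma~\ref{lemma1}) together with the combinatorics of commuting tuples in wreath products. First I would expand the left hand side of~(\ref{eq_theo1}) through the iterated description~(\ref{part-2}) applied to $(X^n;(G_O)_n,G_B)$: since $G_B$ acts diagonally on $X^n$ and commutes with $(G_O)_n$, the quotients below inherit a $G_B$-action and
\[
\chi^{(k,G_B)}(X^n;(G_O)_n,G_B)=\sum_{[\phi]\in{\rm Hom\,}(\Z^k,(G_O)_n)/(G_O)_n}\chi^{G_B}\bigl((X^n)^{\langle\phi\rangle}/C_{(G_O)_n}(\phi)\bigr).
\]

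Next I would classify the conjugacy classes $[\phi]$ in the standard way. The composite $\Z^k\to(G_O)_n\to S_n$ makes $\{1,\dots,n\}$ a $\Z^k$-set; I group its orbits into \emph{types} $\tau=(\Lambda,[\rho])$, where $\Lambda\le\Z^k$ is the (finite index, hence rank-$k$) stabiliser of a point of the orbit and $[\rho]\in{\rm Hom\,}(\Lambda,G_O)/G_O$ is the conjugacy class of the holonomy into $G_O$. For an orbit of type $\tau$ of size $m_\tau=[\Z^k:\Lambda]$ the associated $X^{m_\tau}$-factor contributes a single copy of $X^{\langle\rho\rangle}$ to the fixed-point set.

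The technical core, which I expect to be the main obstacle, is the $G_B$-equivariant refinement of the wreath-product fixed-point and centraliser computation. Writing $a_\tau$ for the number of orbits of type $\tau$, the centraliser factorises as $C_{(G_O)_n}(\phi)\cong\prod_\tau C_{G_O}(\rho)\wr S_{a_\tau}$, and I must verify that this identification is compatible with the diagonal $G_B$-action (which commutes with every ingredient), yielding an isomorphism of $G_B$-spaces
\[
(X^n)^{\langle\phi\rangle}/C_{(G_O)_n}(\phi)\ \cong\ \prod_\tau S^{a_\tau}\bigl(X^{\langle\rho\rangle}/C_{G_O}(\rho)\bigr).
\]
Because $\chi^{G_B}$ is a ring homomorphism to $A(G_B)$ (it is multiplicative on products with the diagonal action, as one checks on marks using $(Y\times Z)^H=Y^H\times Z^H$), the equivariant Euler characteristic of the left side is then the product over $\tau$ of the classes $\chi^{G_B}\bigl(S^{a_\tau}(X^{\langle\rho\rangle}/C_{G_O}(\rho))\bigr)$. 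I emphasise that the analogous statement for \emph{quotients} fails on marks in general (a $G_B/H$-fixed orbit need not contain an $H$-fixed point), which is exactly why I do the bookkeeping through the wreath structure rather than by marking.

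Finally I would assemble the generating series. Summing over $n$ amounts to summing freely over the multiplicities $(a_\tau)$, so the series factorises as a product over types, and for each fixed $\tau$ Lemma~\ref{lemma1}, i.e. $\sum_{a\ge0}\chi^{G_B}(S^aZ)\,s^a=(1-s)^{-\chi^{G_B}(Z)}$ in the power structure over $A(G_B)$, produces the factor $(1-t^{m_\tau})^{-\chi^{G_B}(X^{\langle\rho\rangle}/C_{G_O}(\rho))}$. Organising the product, I group the finite-index subgroups $\Lambda\le\Z^k$ by the diagonal $(r_1,\dots,r_k)$ of their Hermite normal form: there are exactly $r_2r_3^2\cdots r_k^{k-1}$ of them, each of index $r_1r_2\cdots r_k$, and for each such $\Lambda$ the sum of $\chi^{G_B}(X^{\langle\rho\rangle}/C_{G_O}(\rho))$ over $[\rho]\in{\rm Hom\,}(\Lambda,G_O)/G_O\cong{\rm Hom\,}(\Z^k,G_O)/G_O$ equals $\chi^{(k,G_B)}(X;G_O,G_B)$ by~(\ref{part-2}). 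This gives
\[
\sum_{n\ge0}\chi^{(k,G_B)}(X^n;(G_O)_n,G_B)\,t^n=\prod_{r_1,\dots,r_k\ge1}(1-t^{r_1\cdots r_k})^{-r_2r_3^2\cdots r_k^{k-1}\,\chi^{(k,G_B)}(X;G_O,G_B)},
\]
and properties 2) and 4) of the power structure over $A(G_B)$ rewrite the right side as the stated expression~(\ref{eq_theo1}). The only inputs beyond the non-equivariant argument of \cite{T} are the equivariance check of the displayed isomorphism and the substitution of the Burnside-ring power structure for the integer one.
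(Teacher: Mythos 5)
Your proposal is correct and is in substance the same proof the paper gives: the paper's proof of Theorem~\ref{discrite-ver} consists of the remark that one repeats Tamanoi's wreath-product argument \cite{T} with all sets involved being $G_B$-invariant, substituting the Burnside-ring Macdonald equation of Lemma~\ref{lemma1} for the integer-valued one --- which is precisely the program you carry out (classification of the classes $[\phi]$ by types $(\Lambda,[\rho])$, factorization of the fixed-point quotients into symmetric powers, the Hermite normal form count $r_2r_3^2\cdots r_k^{k-1}$, and Lemma~\ref{lemma1} applied factor by factor).

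One inaccuracy you should repair, although it does not affect the outcome: the centralizer is \emph{not} $\prod_\tau C_{G_O}(\rho)\wr S_{a_\tau}$. Already for $G_O$ trivial and $k=1$, a transitive $\phi$ corresponds to an $n$-cycle $\sigma\in S_n$, whose centralizer is the cyclic group $\langle\sigma\rangle$ of order $n$, not the trivial group. In general $C_{(G_O)_n}(\phi)\cong\prod_\tau\Gamma_\tau\wr S_{a_\tau}$, where $\Gamma_\tau$ is an extension of $\Z^k/\Lambda$ by $C_{G_O}(\rho)$, the quotient $\Z^k/\Lambda$ being covered by the image of $\phi$ itself. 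This is harmless for your argument, because every element of the image of $\phi$ acts trivially on $(X^n)^{\langle\phi\rangle}$, so the $G_B$-equivariant isomorphism $(X^n)^{\langle\phi\rangle}/C_{(G_O)_n}(\phi)\cong\prod_\tau S^{a_\tau}\bigl(X^{\langle\rho\rangle}/C_{G_O}(\rho)\bigr)$ --- the statement you actually use downstream --- remains valid; but as written your justification of it rests on a false description of the centralizer, so you should either state the centralizer correctly and note that the extra factors act trivially, or assert the quotient isomorphism directly.
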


\begin{proof}
 The proof essentially repeats, e.g., the one in \cite{T} (see also\cite{G&Ph}). One has to pay
attention to two facts. First, all subsets participating in the course of the proof in \cite{T}
are $G$-invariant. Second, one has to use the Macdonald type equation for the equivariant Euler
characteristic $\chi^G(\cdot)$: see (\ref{MacEqui}) below. 
\end{proof}

The Macdonald type equation for the equivariant Euler characteristic must be known.
However, we have not found it in the literature. Therefore we put its proof here.

\begin{lemma}\label{lemma1}
 For a $G$-space $X$ one has
 \begin{equation}\label{MacEqui}
  1+ 
  \sum_{n=1}^{\infty} \chi^G(S^nX) \cdot t^n
  = (1-t)^{-\chi^G(X)}\in 1+tA(G)[[t]]\,,
 \end{equation}
where the right hand side is defined by the power structure over the Burnside ring $A(G)$
(or by the $\lambda$-structure on it).
\end{lemma}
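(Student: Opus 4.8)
The plan is to prove the Macdonald-type equation for the equivariant Euler characteristic $\chi^G$ by reducing it to the known scalar Macdonald equation~(\ref{mac-simple}) applied componentwise, exploiting the fact that the Burnside ring $A(G)$ is freely generated as an abelian group by the classes $[G/H]$ and that the power structure on $A(G)$ is the geometric one defined by~(\ref{Power}). The key observation is that the equivariant Euler characteristic, via~(\ref{equiEuler}), records how the ordinary Euler characteristics of the orbit spaces $X^{(\mathfrak h)}/G$ are distributed among the conjugacy classes of isotropy subgroups; so I expect the symmetric powers $S^n X$ to decompose into strata according to the isotropy types appearing among the $n$ points, and the generating series should factor accordingly.

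First I would unwind the definition of the symmetric power $S^n X = X^n / S_n$ as a $G$-space with the diagonal $G$-action, and stratify it by the combinatorial data of the isotropy types of the (unordered) $n$-tuples. A point of $S^n X$ is a finite multiset of points of $X$; its isotropy subgroup in $G$ depends on the isotropy subgroups $G_{x_i}$ of the constituent points together with how $G$ permutes coinciding points of the same isotropy type. This is precisely the configuration-space interpretation of the power structure recalled after~(\ref{Power}): the coefficient of $t^n$ in $(1-t)^{-[X]}$ is built from finite configurations of points of $X$ carrying multiplicities. Second, I would match this stratification against the right-hand side $(1-t)^{-\chi^G(X)}$: writing $\chi^G(X)=\sum_{\mathfrak h}\chi(X^{(\mathfrak h)}/G)\,[G/H]$ and using the multiplicativity property~2) of the power structure together with $(1-t)^{-[G/H]}=\zeta_{[G/H]}(t)=\sum_n [S^n(G/H)]\,t^n$, the target series becomes a product over conjugacy classes $\mathfrak h$ of factors $\bigl((1-t)^{-[G/H]}\bigr)^{\chi(X^{(\mathfrak h)}/G)}$, each controlled by the scalar exponent $\chi(X^{(\mathfrak h)}/G)\in\Z$ acting on the $G$-set $G/H$.

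The core computation, then, is to verify that the equivariant Euler characteristic $\chi^G(S^n X)$, expanded via~(\ref{equiEuler}), reproduces exactly these coefficients. Here I would use additivity of $\chi$ on the strata of $(S^nX)^{(\mathfrak h)}/G$ and the fact that $\chi$ satisfies~(\ref{mac-simple}) on each ordinary orbit space: since $\chi(S^nY)$ is governed by $(1-t)^{-\chi(Y)}$ for any space $Y$, applying this to $Y=X^{(\mathfrak h)}/G$ and keeping track of which isotropy class in $A(G)$ each symmetric configuration contributes should give termwise agreement. Concretely, I expect the identity to follow from the compatibility of the scalar $\lambda$-ring structure ($\chi$ being a $\lambda$-ring homomorphism by~(\ref{mac-simple})) with the $\lambda$-ring structure on $A(G)$ given by $a\mapsto\zeta_a(t)$; that is, $\chi^G$ should intertwine the two $\lambda$-structures, which is exactly what~(\ref{MacEqui}) asserts once one recalls that a $\lambda$-ring homomorphism automatically commutes with taking $\zeta$-series.

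\textbf{The main obstacle} I anticipate is the bookkeeping in the stratification step: when several of the $n$ points coincide or lie in the same isotropy stratum, the isotropy subgroup of the resulting point in $S^nX$ is not simply the product of the individual isotropy groups, and the $G$-set structure of the stratum involves both the $G$-action on $X$ and the symmetric-group quotient. Making the bijection between the geometric configuration description of $(1-t)^{-\chi^G(X)}$ and the isotropy strata of $S^nX$ precise — and checking that the $[G/H]$-weights come out correctly — is where care is needed. I would handle this by reducing to the universal case: it suffices to check~(\ref{MacEqui}) when $X$ is a single orbit $G/H$ (a zero-dimensional $G$-set), since a general $X$ decomposes additively into orbit-type strata and both sides are additive in $X$ by property~3) of the power structure together with additivity of $\chi^G$. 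For $X=G/H$ the identity $(1-t)^{-[G/H]}=\sum_n[S^n(G/H)]t^n$ is precisely the defining relation of the $\lambda$-structure on $A(G)$ recalled in the excerpt, which closes the argument.
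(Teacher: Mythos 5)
Your opening reduction is sound and coincides with the paper's first step: for a $G$-invariant decomposition $X=X_1\coprod X_2$ one has $S^nX=\coprod_{m=0}^n(S^mX_1)\times(S^{n-m}X_2)$, so additivity and multiplicativity of $\chi^G$ make the left-hand side of (\ref{MacEqui}) multiplicative in $X$, while property 3) of the power structure does the same for the right-hand side; hence it suffices to prove (\ref{MacEqui}) for the pieces of a $G$-invariant decomposition of $X$. The genuine gap is your final step, where you claim this reduces the lemma to the case of a single orbit $X=G/H$. It does not: the orbit-type strata $X^{(\mathfrak{h})}$ are in general positive-dimensional $G$-spaces containing a continuum of orbits, and finite multiplicativity cannot split them into single orbits. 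What a decomposition of a good $G$-space actually produces is pieces of the form $\sigma^d\times(G/H)$, an open $d$-cell with trivial $G$-action times an orbit, and for such a piece the required identity is
\[
\sum_{n\ge 0}\chi^G\bigl(S^n(\sigma^d\times(G/H))\bigr)\,t^n=(1-t)^{-(-1)^d[G/H]}.
\]
For $d>0$ this is \emph{not} the defining relation of the power structure on $A(G)$: since $\chi$ is the Euler characteristic with compact support, the exponent $(-1)^d[G/H]$ is a virtual (possibly negative) class, and the configuration-space, i.e.\ definitional, interpretation of $(1-t)^{-[M]}$ applies only to actual finite $G$-sets $M$. Your proposal never confronts the open cells and the signs they contribute, which is exactly what remains to be proved after the multiplicativity reduction.

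This is precisely the point the paper's proof is built to handle: it shows by a $G$-equivariant contraction that $\chi^G(S^k(\overline{\sigma}^d\times(G/H)))=[S^k(G/H)]$ for \emph{closed} cells, and then runs an induction on $d$, cutting the open cell as $\sigma^{d_0}=\sigma^{d_0}\coprod\sigma^{d_0}\coprod\sigma^{d_0-1}$, which by multiplicativity forces the series for $\sigma^{d_0}\times(G/H)$ to be the \emph{inverse} of the series for $\sigma^{d_0-1}\times(G/H)$, producing the sign $(-1)^{d_0}$ in the exponent; the base case $d=0$ is the finite $G$-set case you invoke. Relatedly, your middle ``core computation'' (matching the isotropy stratification of $S^nX$ against the coefficients of $\prod_{\mathfrak{h}}\bigl((1-t)^{-[G/H]}\bigr)^{\chi(X^{(\mathfrak{h})}/G)}$) is left at the level of ``should give termwise agreement''; the obstacle you yourself flag --- that a configuration of points whose individual isotropy groups lie in $\mathfrak{h}$ can have strictly larger isotropy because $G$ may permute the points --- is never resolved, only deferred to the single-orbit reduction, which fails. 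To repair the argument you need to add the induction on cell dimension (or an equivalent direct computation of $\chi^G(S^n(\sigma^d\times(G/H)))$ for open cells), at which point your proof becomes the paper's.
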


\begin{remark}
 Note that, for a finite $G$-set $X$, the equation~(\ref{MacEqui}) is just the definition of its
 right hand side. (In this case $\chi^G(X)=[X]\in A(G)$.)
\end{remark}

\begin{proof}
 Let us denote the left hand side of (\ref{MacEqui}) by $\chi^G\zeta_{X,G}(t)$.
 If $X=X_1\coprod X_2$ is a decomposition of $X$ into two $G$-subspaces, one has
 $$
 \chi^G\zeta_{X,G}(t)=\chi^G\zeta_{X_1,G}(t)\cdot\chi^G\zeta_{X_2,G}(t)\,.
 $$
 (This follows from the identities $S^nX=\coprod\limits_{m=0}^n (S^{m}X_1)\times(S^{n-m}X_2)$,
 $\chi^G(X\times Y)=\chi^G(X)\chi^G(Y)$.) Therefore it is sufficient to prove (\ref{MacEqui}) for the elements
 of a decomposition of $X$ into $G$-invariant subspaces. A ``good enough'' $G$-space (say, a real subanalytic
 one) can be represented as the disjoint union of subspaces of the form $\sigma^d\times (G/H)$, where
 $H$ is a subgroup of $G$, $G/H$ is the corresponding $G$-set (the quotient), and $\sigma^d$ is the open
 cell of dimension $d$ with the trivial $G$-action. Let $\overline{\sigma}^d\supset\sigma^d$ be the closed
 $d$-dimensional ball. Since $\overline{\sigma}^d$ can be $G$-equivariantly contructed to a point,
 $S^k(\overline{\sigma}^d\times(G/H))$ can be contracted to $S^k(G/H)$. Therefore
 $$
 \chi^G(S^k(\overline{\sigma}^d\times(G/H)))=\chi^G(S^k(G/H))=[S^k(G/H)]
 $$
 (see, e.g., \cite{TtD} where this is formulated for finite $G$-$CW$-complexes)
 and thus
 $$
 \chi^G\zeta_{S^k(\overline{\sigma}^d\times(G/H))}(t)=1+\sum_{i=1}^{\infty}[S^i(G/H)]t^i
 =
 (1-t)^{-[G/H]}=(1-t)^{-\chi^G(\overline{\sigma}^d\times(G/H))}\,.
 $$
 The equation~(\ref{MacEqui}) obviously holds for $X=\sigma^d\times (G/H)$ with $d=0$
 (when $\sigma^d$ is a point).
 Assume that it holds for $X=\sigma^d\times (G/H)$ with $d<d_0$. One has a decomposition
 $\sigma^{d_0}=\sigma^{d_0}\coprod\sigma^{d_0}\coprod\sigma^{d_0-1}$. Therefore
 $$
 \chi^G\zeta_{\sigma^{d_0}\times (G/H)}(t)=\left(\chi^G\zeta_{\sigma^{d_0}\times (G/H)}(t)\right)^2\cdot
 \chi^G\zeta_{\sigma^{(d_0-1)}\times (G/H)}(t)\,,
 $$
 \begin{eqnarray*}
 \chi^G\zeta_{\sigma^{d_0}\times (G/H)}(t)&=&
 \left(\chi^G\zeta_{\sigma^{(d_0-1)}\times (G/H)}(t)\right)^{-1}=
 \left((1-t)^{-(-1)^{(d_0-1)}[G/H]}\right)^{-1}\\
 &=&(1-t)^{-(-1)^{d_0}[G/H]}=(1-t)^{-\chi^G(\sigma^{d_0}\times(G/H))}\,.
 \end{eqnarray*}
\end{proof}

The equation~(\ref{eq_theo1}) has an interpretation similar to the one for ``usual''
(non-equivariant) higher order Euler characteristics. Set 
$$
\X^{(k)}_{(X;G_O,G_B)}(t):=\sum_{n\ge 0}\chi^{(k,G_B)}(X^n; {G_O}_n, G_B)\cdot  t^n.
$$
Let $G_O$ (regarded as a zero-dimensional finite space) be endowed with the natural action of
the group $G_O$ and with the trivial action of the group $G_B$.
Then one has $\chi^{(k,G_B)}(G_O; G_O, G_B)=1$. Therefore
$$
\X^{(k)}_{(G_O;G_O,G_B)}(t)=
\left(\prod\limits_{r_1, \ldots,r_k\geq 1}\left(1-t^{r_1r_2\cdots r_k}\right)^{r_2r_3^2\cdots r_k^{k-1}}
\right)^{-1}
$$
and thus
$$
\X^{(k)}_{(X;G_O,G_B)}(t)=\left(\X^{(k)}_{(G_O;G_O,G_B)}(t)\right)^{\chi^{(k,G_B)}(X; G_O,G_B)}.
$$

\section{Equivariant generalized higher order Euler characteristics}\label{secGeneral}
For a finite group $G$,
let $K_0^{G}(\Var)$ be the Grothendieck ring of complex quasi-projective $G$-varieties. 
By that we mean the free abelian group generated by the $G$-isomorphism classes $[X,G]$
(or $[X]$ for short) of complex quasi-projective varieties
$X$ with $G$-actions modulo the relation:
$[X,G]=[Y,G]+[X\setminus Y,G]$ for a Zariski closed $G$-invariant subvariety $Y$ of $X$.
The multiplication in $K_0^{G}(\Var)$ is defined by the cartesian product with the diagonal $G$-action.
Let $\LLL\in K_0^{G}(\Var)$ be the class of the affine line $\AA_{\C}^1$ with the trivial $G$-action.

\begin{remark}
Usually, in the definition of the Grothendieck ring of complex quasi-projective $G$-varieties,
one adds one more relation:
if $E\to X$ is a $G$-equivariant vector bundle of rank $n$, then $[E]=[\AA_{\C}^n\times X]$.
We do not need this relation for the construction. One can say that we use the Grothendieck ring
denoted by $K_0^{', G}(\Var)$ in \cite{bittner}. The same definition was used in \cite{Mazur}.
An equation which holds in the equivariant Grothendieck ring $K_0^{G}(\Var)$ defined here,
holds in the ``traditional'' one as well.
\end{remark}

There is a natural power structure over the (equivariant) Grothendieck ring $K_0^{G}(\Var)$. Its geometric definition
is given in the same way as the usual power structure over the (non-equivariant) Grothendieck ring $K_0(\Var)$
in \cite{GLM-MRL}: for complex quasi-projective $G$-varieties $A_i$, $i=1, 2,\ldots$, and $M$ one has
\begin{eqnarray}\label{power}
&{}&(1+[A_1] t+[A_2] t^2+\ldots)^{[M]}
\nonumber\\
&{=}&1+\sum_{k=1}^{\infty}\sum_{\{k_i\}:\sum ik_i=k}
\left[\left(
(\prod_i M^{k_i}
)
\setminus\Delta
\right)
\times\prod_i A_i^{k_i}/\prod_iS_{k_i}\right] t^k\,,
\end{eqnarray}
where $\Delta$ is the ``big diagonal'' in $M^{\sum k_i}$, the symmetric groups $S_{k_i}$
act by the simultaneous permutations of the components of the corresponding factor $M^{k_i}$
in $M^{\sum k_i}=\prod_i M^{k_i}$
and on the components of $A_i^{k_i}$.
One has to take into account that all summands in the right hand side of the equation~(\ref{power})
are $G$-invariant  spaces.  The proof of the necessary properties of the power structure
is the same as in \cite{GLM-MRL}. This power structure is induced by the $\lambda$-structure
on  $K_0^{G}(\Var)$ defined by the Kapranov zeta-function.
For  a quasi-projective $G$-variety $X$, the series $(1-t)^{-[X]}$
is the 
equivariant Kapranov zeta-function of $X$:
$\zeta_{[X]}(t):=1+[X]\cdot t+[S^2 X]\cdot t^2+ [S^3X]\cdot t^3+\ldots=(1-t)^{-[X]},$
where $S^k X=X^k/S_k$ is the $k$-th symmetric power of the $G$-variety $X$ with the natural $G$-action 
(see, e.g., \cite{Kap}, \cite{Mazur} for the non-equivariant case). The map $\chi^G:K_0^{G}(\Var)\to A(G)$
is a $\lambda$-ring homomorphism.

In what follows we need the following statement.

\begin{lemma}\label{lemma2}
Let  $p:E\to X$ be a $G$-equivariant vector bundle of rank $n$ such that for
each $x\in X$ the action of the isotropy subgroup $G_{x}$ on the  fibre $E_x=p^{-1}(x)$ is trivial.  
Then $[E]=\LLL^n [X]$.  
\end{lemma}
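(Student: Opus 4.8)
The plan is to use the scissor (additivity) relations in $K_0^{G}(\Var)$ to reduce to the case of a single orbit type, and then to reduce the single-orbit-type case to a \emph{free} action, where the assertion follows from the Zariski-local triviality of algebraic vector bundles. The hypothesis that $G_x$ acts trivially on $E_x$ is precisely what makes the second reduction work.

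First I would stratify $X$ by orbit type, writing $X=\bigsqcup_{\mathfrak{h}\in{\rm Conjsub\,}G}X^{(\mathfrak{h})}$ as a finite disjoint union of the $G$-invariant locally closed subvarieties $X^{(\mathfrak{h})}$ (points whose isotropy group lies in $\mathfrak{h}$). Since $[X]=\sum_{\mathfrak{h}}[X^{(\mathfrak{h})}]$ and $[E]=\sum_{\mathfrak{h}}[p^{-1}(X^{(\mathfrak{h})})]$ by iterated scissor relations, and each restriction $p^{-1}(X^{(\mathfrak{h})})\to X^{(\mathfrak{h})}$ is again a $G$-equivariant bundle satisfying the hypothesis, it suffices to prove $[E]=\LLL^n[X]$ when $X$ has a single orbit type $(H)$.

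Next, for a single orbit type $(H)$ I would invoke the standard description $X\cong G\times_{N}X^{H}$ with $N=N_G(H)$, where $X^{H}$ is the $H$-fixed locus (an $N$-invariant subvariety on which $H$ acts trivially and $W:=N/H$ acts freely), together with the compatible identification $E\cong G\times_{N}(E|_{X^{H}})$ of the bundle. The assignment $[Z]\mapsto[G\times_{N}Z]$ defines an additive induction homomorphism ${\rm Ind}_N^G\colon K_0^{N}(\Var)\to K_0^{G}(\Var)$ which commutes with multiplication by $\LLL$, because $G\times_{N}(\AA_{\C}^n\times Z)=\AA_{\C}^n\times(G\times_{N}Z)$ with trivial action on the affine factor; moreover it sends $[X^{H}]_N\mapsto[X]_G$ and $[E|_{X^{H}}]_N\mapsto[E]_G$. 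Hence it is enough to prove $[E|_{X^{H}}]_N=\LLL^n[X^{H}]_N$ in $K_0^{N}(\Var)$. Now by hypothesis $H=G_x$ acts trivially on each fibre $E_x$ for $x\in X^{H}$, and $H$ acts trivially on $X^{H}$ itself, so the entire situation carries a trivial $H$-action and descends to a $W$-equivariant vector bundle over $X^{H}$ with $W$ acting freely; the equality to be shown becomes $[E|_{X^{H}}]_W=\LLL^n[X^{H}]_W$ in $K_0^{W}(\Var)$.

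Finally, in the free case I would set $Y=X^{H}$ and observe that $E|_{Y}$ descends to an honest vector bundle $\bar{E}=(E|_{Y})/W$ over $B=Y/W$, with $E|_{Y}=\pi^{*}\bar{E}$ for the projection $\pi\colon Y\to B$. Since an algebraic vector bundle is Zariski-locally trivial, I would choose a stratification $B=\bigsqcup_j B_j$ with $\bar{E}|_{B_j}\cong\AA_{\C}^n\times B_j$; pulling back along $\pi$ trivializes $E|_{Y}$ over each $W$-invariant piece $\pi^{-1}(B_j)$ as $\AA_{\C}^n\times\pi^{-1}(B_j)$, with trivial $W$-action on the affine factor. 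Additivity then yields $[E|_{Y}]_W=\sum_j\LLL^n[\pi^{-1}(B_j)]_W=\LLL^n[Y]_W$, which closes the chain of reductions. I expect the main obstacle to be the passage through nontrivial isotropy: setting up the single-orbit-type structure $X\cong G\times_{N}X^{H}$ in the quasi-projective algebraic category and checking that ${\rm Ind}_N^G$ is well defined, additive, and $\LLL$-linear, together with the compatible identification $E\cong G\times_{N}(E|_{X^{H}})$. Once one is reduced to a free action the statement is routine.
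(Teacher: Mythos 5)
Your proof is correct, but it takes a noticeably longer route than the paper's. The paper performs the descent in a single step, with no reduction to free actions: the hypothesis that $G_x$ acts trivially on $E_x$ is invoked exactly to conclude that the quotient map $\check p\colon E/G\to X/G$ is a vector bundle; by Serre's theorem (\cite{Serre}) the quotient $X/G$ is covered by Zariski open sets $U_i$ over which $\check p$ is trivial; the preimages $V_i=\pi^{-1}(U_i)$ under $\pi\colon X\to X/G$ are then $G$-invariant Zariski opens with $p^{-1}(V_i)\cong V_i\times\AA_{\C}^n$ and trivial $G$-action on the affine factor, and additivity gives $[E]=\LLL^n[X]$. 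Your two preliminary reductions --- the orbit-type stratification and the induction $[Z]\mapsto[G\times_N Z]$ from the normalizer --- serve only to replace this descent statement by descent along a free action, and that is a genuine trade-off rather than a mistake. What you gain: for the free action of $W=N/H$ the descended object $\bar{\mathcal E}=(\pi_*{\mathcal E})^W$ is automatically locally free with $\pi^*\bar{\mathcal E}\cong{\mathcal E}$, so the delicate point (that a quotient of an equivariant bundle is again a bundle) becomes standard, and the role of the trivial-isotropy hypothesis is isolated cleanly at your inflation step from $N$ to $W$. What you pay: ordering the orbit-type strata so that the scissor relations iterate, the identifications $X\cong G\times_N X^H$ and $E\cong G\times_N(E|_{X^H})$ as $G$-varieties, and the verification that induction is well defined, additive and commutes with multiplication by $\LLL$ --- none of which the paper needs. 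One caution: your phrase ``an algebraic vector bundle is Zariski-locally trivial'' is precisely where the nontrivial input sits (it is what the paper cites Serre for); if ``descends to an honest vector bundle'' is taken to mean ``\'etale-locally trivial'', you still owe Hilbert 90/Serre to get Zariski-local trivializations, so it is cleanest either to phrase the free-case descent sheaf-theoretically as above or to cite \cite{Serre} as the paper does.
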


\begin{proof}
Factorizing by the action of $G$ one gets the map $\check{p}: E/{G} \to X/G$
which is a vector bundle (due to the triviality of the action of $G_{x}$ on $E_x$ ).
According to \cite{Serre} the quotient $X/G$ can be covered by Zariski open subsets $U_i$ such that
over each $U_i$ the fibre bunble $\check{p}$ is trivial.
Therefore 
\begin{equation}\label{trivialization}
 \check{p}^{-1}(U_i) \cong  U_i\times  \AA_{\C}^n.
\end{equation}
If $V_i= {\pi}^{-1}(U_i)$, where $\pi:X\to X/G$ is the canonical factorization map,
then the trivialization $(\ref{trivialization})$
gives a trivialization of the bundle $p$ over $V_i$, i.e. an isomorphism between $p^{-1}(V_i)$ and
$V_i\times  \AA_{\C}^n$ with the trivial
$G$-action on  $\AA_{\C}^n$. This gives the statement.
\end{proof}

In what follows we need the following properties of the power structure over
the equivariant Grothendieck ring $K_0^{G}(\Var)$.

\begin{proposition}\label{prop1}
Let $A_i$ and $M$ be  $G$-varieties, and let $A(t):=1+[A_1]t+[A_2]t^2+\ldots\in K_0^{G}(\Var)$.
Then, for $s\ge0$,
\begin{equation}\label{Lt}
 \left(A(\LLL^s t)\right)^{[M]} = \left(A(t)\right)^{[M]}|_{t \mapsto \LLL^s t}\,.
\end{equation}
\end{proposition}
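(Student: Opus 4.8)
The plan is to prove the identity coefficient-by-coefficient in $t$, reinterpreting the left-hand side geometrically via formula~(\ref{power}) and then reducing the whole statement to an application of Lemma~\ref{lemma2}. First I would unwind the substitution: writing $A(t)=1+\sum_{i\ge 1}[A_i]t^i$, replacing $t$ by $\LLL^s t$ turns the $i$-th coefficient into $\LLL^{si}[A_i]=[\AA_{\C}^{si}\times A_i]$, where $\AA_{\C}^{si}$ carries the \emph{trivial} $G$-action. Feeding this into~(\ref{power}), the coefficient of $t^k$ in $\left(A(\LLL^s t)\right)^{[M]}$ is the sum over $\{k_i\}$ with $\sum_i ik_i=k$ of the classes of $\big((M^{\sum_i k_i}\setminus\Delta)\times\prod_i(\AA_{\C}^{si}\times A_i)^{k_i}\big)/\prod_i S_{k_i}$. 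Pulling out the affine factors, $\prod_i(\AA_{\C}^{si})^{k_i}=\AA_{\C}^{sk}$ because $\sum_i sik_i=sk$, so each such term has the shape $(P\times\AA_{\C}^{sk})/S$, where $P=(M^{\sum_i k_i}\setminus\Delta)\times\prod_i A_i^{k_i}$ and $S=\prod_i S_{k_i}$, and $P/S$ is exactly the matching term in the coefficient of $t^k$ of $\left(A(t)\right)^{[M]}$.

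On the other side, $\left(A(t)\right)^{[M]}\big|_{t\mapsto\LLL^s t}$ has as its $t^k$-coefficient precisely $\LLL^{sk}$ times the $t^k$-coefficient of $\left(A(t)\right)^{[M]}$, i.e. $\LLL^{sk}[P/S]$ summed over the same partitions. So, term by term, the proposition reduces to the single geometric identity $[(P\times\AA_{\C}^{sk})/S]=\LLL^{sk}[P/S]$ in $K_0^{G}(\Var)$. Since the big diagonal $\Delta$ has been deleted, all $\sum_i k_i$ coordinates of a point of $P$ are distinct, hence $S$ acts \emph{freely} on $P$; therefore $P\to P/S$ is a principal $S$-cover and the projection $q\colon (P\times\AA_{\C}^{sk})/S\to P/S$ is a $G$-equivariant vector bundle of rank $sk$. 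The plan is then to apply Lemma~\ref{lemma2} to $q$ to get $[(P\times\AA_{\C}^{sk})/S]=\LLL^{sk}[P/S]$, after which summing over $\{k_i\}$ and over $k$ completes the argument. The hypothesis $s\ge 0$ is used only to guarantee that $\LLL^s$ is the honest class of a variety, so that~(\ref{power}) applies literally rather than as a formal rational power.

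The step I expect to be the main obstacle is verifying the hypothesis of Lemma~\ref{lemma2}, namely that for every $y\in P/S$ the isotropy group $G_y$ acts trivially on the fibre $q^{-1}(y)\cong\AA_{\C}^{sk}$. Choosing a lift $\tilde y\in P$, an element $g\in G_y$ satisfies $g\tilde y=\sigma\tilde y$ for a unique $\sigma\in S$ (unique by freeness), and then $g$ acts on the fibre through $\sigma$. The delicate point is precisely that $G_y$ may permute the coinciding-particle labels, and hence the corresponding $\AA_{\C}^{si}$-blocks of $\AA_{\C}^{sk}$, so one has to check carefully that the fibrewise action really is trivial. This is where the triviality of the $G$-action on $\LLL=[\AA_{\C}^1]$ must be used decisively: the entire $G$-action on the affine coordinates is inherited from those trivial factors, and only after tracking how the commuting $S$- and $G$-actions interact on $\prod_i(\AA_{\C}^{si})^{k_i}$ can one confirm that $q$ meets the triviality condition of Lemma~\ref{lemma2}.

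Alternatively, one can shortcut the combinatorics using that the power structure comes from the Kapranov $\lambda$-structure on $K_0^{G}(\Var)$: writing $A(t)=\prod_{i\ge1}\zeta_{b_i}(t^i)$, both sides are computed from the $b_i$, and the whole proposition collapses to the single relation $\zeta_{\LLL^s[M]}(t)=\zeta_{[M]}(\LLL^s t)$, equivalently $[S^n(\AA_{\C}^s\times M)]=\LLL^{sn}[S^n M]$ for all $n$. This reduction is purely formal from the $\lambda$-ring axioms recalled in the excerpt; the geometric content is again identical, and I expect exactly the same fibrewise-triviality verification to be the crux — now for the quotient map $(\AA_{\C}^{sn}\times M^n)/S_n\to S^n M$, which along the diagonal strata is no longer even a vector bundle, so one would first stratify $S^n M$ by orbit type and apply Lemma~\ref{lemma2} stratum by stratum.
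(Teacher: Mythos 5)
Your reduction is exactly the paper's own: you expand both sides coefficientwise via~(\ref{power}), reduce to the single identity $[(P\times\AA_{\C}^{sk})/S]=\LLL^{sk}\,[P/S]$ with $P=(M^{\sum_i k_i}\setminus\Delta)\times\prod_i A_i^{k_i}$ and $S=\prod_i S_{k_i}$, observe that $q\colon (P\times\AA_{\C}^{sk})/S\to P/S$ is a $G$-equivariant vector bundle of rank $sk$ (freeness of $S$ off the diagonal), and invoke Lemma~\ref{lemma2}; the paper does the same with $\widetilde V\to V$ in place of $q$. But the step you flag as ``the main obstacle'' and leave unverified --- that every isotropy group $G_y$ acts trivially on the fibre $q^{-1}(y)$ --- is not a check that can be completed: it is false, for exactly the reason you identify. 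If $g\tilde y=\sigma\tilde y$ with $\sigma\neq e$, then $g$ acts on $q^{-1}(y)\cong\prod_i(\AA_{\C}^{si})^{k_i}$ by the corresponding block permutation, a nontrivial linear map when $s>0$. Concretely, take $G=\Z/2\Z$, $M=\{p,q\}$ with the two points interchanged, $A(t)=1+t$, $s=1$, $k=2$: then $P/S$ is a single $G$-fixed point and the fibre is $\AA_{\C}^2$ with $G$ acting by the coordinate swap. Hence the $t^2$-coefficient of $\left(A(\LLL t)\right)^{[M]}$ is $[\AA_{\C}^2,\mathrm{swap}]$, while that of $\left(A(t)\right)^{[M]}|_{t\mapsto\LLL t}$ is $\LLL^2$, and these are \emph{distinct} in $K_0^{G}(\Var)$ as defined in this paper (i.e.\ without the vector-bundle relation): the assignment $[X]\mapsto[X^G]$ respects $G$-isomorphisms and $G$-invariant scissor relations, so it defines a ring homomorphism $K_0^{G}(\Var)\to K_0(\Var)$, and it sends the first class to $\LLL$ but the second to $\LLL^2$, which differ (e.g.\ by the Hodge--Deligne polynomial).

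So the gap is genuine and, moreover, unfillable in the stated setting: the triviality hypothesis of Lemma~\ref{lemma2} fails for $q$, and with it the identity~(\ref{Lt}) itself fails in the ring $K_0^{G}(\Var)$ used here. Your fallback route through the Kapranov $\lambda$-structure collapses at the same point, as you anticipated: in the example above $[S^2(\AA_{\C}^1\times M)]=[\AA_{\C}^2,\mathrm{swap}]+\LLL^2[M]$, whereas $\LLL^2[S^2M]=\LLL^2+\LLL^2[M]$, so the equivariant Totaro-type identity $[S^n(\AA_{\C}^s\times M)]=\LLL^{sn}[S^nM]$ already fails for $n=2$. You should also know that the paper's own proof is in no better shape: it simply asserts that $\widetilde V\to V$ ``satisfies the conditions of Lemma~\ref{lemma2}'', which is precisely the assertion your obstacle refutes; your analysis has in effect located an error there. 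The statement does become true --- with a one-line proof needing no Lemma~\ref{lemma2} at all --- if one works in the ``traditional'' equivariant Grothendieck ring mentioned in the paper's Remark, where $[E]=[\AA_{\C}^n\times X]$ is imposed for every $G$-equivariant vector bundle $E\to X$; the price is that invariants such as $[X]\mapsto[X^G]$, which detect the discrepancy above, no longer factor through that quotient.
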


\begin{proof}
The coefficient at the monomial $t^k$ in the power series $\left(A(t)\right)^{[M]}$ is a sum
of the classes of the varieties of the form
$$
V=\left(
(\prod_i M^{k_i}
)
\setminus\Delta
\right)
\times\prod_i A_i^{k_i}/\prod_iS_{k_i}
$$
with $\sum ik_i=k.$ The corresponding summand $\widetilde V$ in the coefficient
at the monomial $t^k$ in the power series $\left(A(\LLL^s t)\right)^M$
has the form
$$\widetilde V=\left(
(\prod_i M^{k_i}
)
\setminus\Delta
\right)
\times\prod_i (\LLL^{si}A_i)^{k_i}/\prod_iS_{k_i}.
$$
There is the natural map $\widetilde V \to V$
which is a $G$-equivariant vector bundle of rank $sk$ satisfying the conditions of Lemma~\ref{lemma2}.
By (\ref{lemma2}) one has $[\widetilde{V}]=\LLL^{sk}[V]$, what implies (\ref{Lt}).
\end{proof}

Proposition~\ref{prop1} together with the multiplicative properrty
of the power structure ($\left(A(t)\right)^{[M][N]}=(A(t)^{[M]})^{[N]}$) implies the following statement.

\begin{proposition}\label{prop2}
For a complex quasi-projective $G$-variety $X$ one has 
\begin{equation}\label{zetaLt}
 \zeta_{\LLL[X]}(t) =\zeta_{[X]}(\LLL t)\,.
\end{equation}
\end{proposition}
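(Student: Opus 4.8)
The plan is to reduce everything to the equivariant Kapranov zeta function identity $\zeta_{[X]}(t)=(1-t)^{-[X]}$ and then to move the factor $\LLL$ between the exponent and the variable using Proposition~\ref{prop1} and the multiplicativity of the power structure. The statement is essentially a corollary of these two facts, so the work is bookkeeping with the power-structure axioms rather than any new geometric input.

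First I would introduce the geometric series $A(t):=1+t+t^2+\ldots=(1-t)^{-1}$, whose coefficients are the classes of a point with the trivial $G$-action. As already recorded in the text, one has $(1-t)^{-[X]}=\left(A(t)\right)^{[X]}$, so that $\zeta_{[X]}(t)=\left(A(t)\right)^{[X]}$ and hence, substituting $t\mapsto\LLL t$, $\zeta_{[X]}(\LLL t)=\left(A(t)\right)^{[X]}\big|_{t\mapsto\LLL t}$.

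Next I would apply Proposition~\ref{prop1} with $M=X$ and $s=1$, which converts the substitution in the variable into a substitution inside the base series: $\left(A(t)\right)^{[X]}\big|_{t\mapsto\LLL t}=\left(A(\LLL t)\right)^{[X]}$. The point of this step is to recognize the new base series: $A(\LLL t)=1+\LLL t+\LLL^2 t^2+\ldots$, and since $S^n\AA^1\cong\AA^n$ with the trivial $G$-action inherited from $\AA^1$, one has $[S^n\AA^1]=\LLL^n$, whence $A(\LLL t)=\sum_{n\ge 0}[S^n\AA^1]t^n=\zeta_{\LLL}(t)=(1-t)^{-\LLL}$.

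Finally I would invoke the multiplicative property $\left(A(t)\right)^{[M][N]}=\left(\left(A(t)\right)^{[M]}\right)^{[N]}$ with the factorization $\LLL[X]=\LLL\cdot[X]$ to collapse the iterated power: $\left((1-t)^{-\LLL}\right)^{[X]}=(1-t)^{-\LLL[X]}=\zeta_{\LLL[X]}(t)$. Chaining the three equalities gives $\zeta_{[X]}(\LLL t)=\zeta_{\LLL[X]}(t)$, which is~(\ref{zetaLt}). I expect the only point genuinely requiring care to be the identification $A(\LLL t)=\zeta_{\LLL}(t)$, i.e.\ the equivariant computation $[S^n\AA^1]=\LLL^n$ together with the verification that the coefficients of $A$ are honest $G$-variety classes so that Proposition~\ref{prop1} applies; the remaining manipulations are purely formal consequences of the power-structure axioms.
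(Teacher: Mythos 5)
Your proof is correct and follows essentially the same route as the paper, which derives Proposition~\ref{prop2} precisely from Proposition~\ref{prop1} combined with the multiplicative property $\left(A(t)\right)^{[M][N]}=\left(\left(A(t)\right)^{[M]}\right)^{[N]}$ of the power structure. Your write-up merely makes explicit the bookkeeping the paper leaves implicit, including the identification $A(\LLL t)=\zeta_{\LLL}(t)$ via $[S^n\AA_{\C}^1]=\LLL^n$.
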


Here we suggest an equivariant version of the generalized higher order Euler characteristic with values in
the modification $K_0(\Var)[\LLL^{s}]\,_{s\in \Q}$ of the equivariant Grothendieck ring of complex quasi-projective varieties.

Let $X$ be a smooth quasi-projective variety of dimension $d$ with
commuting (algebraic) actions of two finite groups $G_{O}$ and $G_{B}$
(or equivalently with an action of the product $G_{O}\times G_{B}$).

Let us define the \emph{zero order equivariant generalized Euler characteristic}  of $(X;G_O,G_B)$ by
\begin{equation}\label{equiv_gen_0}
 [X; G_O,G_B]^{0,G_B}:=[X/G_O]\in K_0^{G_B}(\Var).
\end{equation}

For $g\in G_O$, let the set of the connected components of the fixed point set  $X^{\langle g\rangle}$
consist of $N_g$ $(C_{G_O}(g)\times G_B)$-orbits and let
$X^{\langle g\rangle}_1$, $X^{\langle g\rangle}_2$, \ldots, $X^{\langle g\rangle}_{N_g}$
be the unions of the components of each of the orbits. 
For $1\leq \alpha_g \leq N_g$ let $F^{g}_{\alpha_g}$ be the age $F^{g}_{x}$ at a point of
$X^{\langle g\rangle}_{\alpha_g}$ (this age does not depend on the point
$x\in X^{\langle g\rangle}_{\alpha_g}$).

\begin{definition}
For a rational number $\varphi_1\in \Q$, the \emph{generalized orbifold Euler characteristic
of weight} $\varphi_1$ of $(X; G_O,G_B)$  is defined by  
\begin{eqnarray}\label{equiv_gen_orb}
&{\ }&[X;G_O,G_B]_{\varphi_1}^{1,G_B}:=\sum_{[g]\in (G_O)_*}\sum_{\alpha_g=1}^{N_g}
[X^{\langle g\rangle}_{\alpha_g}/C_{G_O}(g)]\cdot\,\LLL^{\varphi_1 F^{\langle g\rangle}_{\alpha_g}}
\nonumber\\
&{=}&\sum_{[g]\in (G_O)_*}\sum_{\alpha_g=1}^{N_g}
[X;G_O,G_B]^{0,G_B}
\cdot\,\LLL^{\varphi_1 F^{\langle g\rangle}_{\alpha_g}}
\in K_0^{G_B}(\Var)[\LLL^{s}]\,_{s\in \Q}\,.
\end{eqnarray}
 \end{definition}

For a subgroup $H_O\subset G_O$ and for an $H_O\times G_B$-invariant submanifold $Y\subset X$, let us define $[Y;H_O,G_B]_{X,\varphi_1}^{1,G_B}$ by
\begin{equation}\label{equiv_gen_orb-inX}
[Y;H_O,G_B]_{X,\varphi_1}^{1,G_B}:=\sum_{[g]\in (H_O)_*}\sum_{\alpha_g=1}^{N_g}
[Y^{\langle g\rangle}_{\alpha_g}/C_{H_O}(g)]\cdot\,\LLL^{\varphi_1 F^{\langle g\rangle}_{\alpha_g}}
\in K_0^{G_B}(\Var)[\LLL^{s}]\,_{s\in \Q}\,,
\end{equation}
 where $Y^{\langle g\rangle}_{\alpha_g}=X^{\langle g\rangle}_{\alpha_g}\cap Y$, $F^{\langle g\rangle}_{\alpha_g}$ is the age of $g$
at a point of $X^{\langle g\rangle}_{\alpha_g}$. 

Let  $\underline{\varphi}=(\varphi_1,\varphi_2,\ldots)$ be a fixed sequence of rational numbers.

\begin{definition}
The {\em equivariant generalized Euler characteristics of order} $k$ {\em of weight} 
$\underline{\varphi}$ of $(X; G_O,G_B)$, as an element of $K_0^{G_B}(\Var)[\LLL^{s}]\,_{s\in \Q}$, is defined recursively by
\begin{equation*}
 [X; G_O,G_B]^{k, G_B}_{\underline{\varphi}}:=\sum_{[g]\in (G_O)_*}
\sum_{\alpha_g=1}^{N_g}[X^{\langle g\rangle }_{\alpha_g}; C_G(g), G_B]^{(k-1), G_B}_{X,\underline{\varphi}}
\cdot\,\LLL^{\varphi_k F^{\langle g\rangle }_{\alpha_g}},
\end{equation*}
where, for an $H_0\times G_B$-invariant submanifold $Y\subset X$ ($H_O\subset G_O$) ,
$[Y;H_O,G_B]^{0, G_B}_{X,\underline{\varphi}}=[Y;H_O,G_B]^{0, G_B}$ is the 
zero order equivariant generalized 
Euler characteristic given by (\ref{equiv_gen_0}).
\end{definition}

\begin{remark}
Iterating this definition one can write an equation for $[X; G_O,G_B]^{k, G_B}_{\underline{\varphi}}$
analogues to (\ref{part-2}). For a homomorphism $\phi:\Z^k \to G_O$,   
let the set of the connected components of the fixed point set  $X^{\langle \phi \rangle}$
consist of $N_\phi$ $(C_{G_O}(g)\times G_B)$-orbits and let
$X^{\langle \phi \rangle}_1$, $X^{\langle \phi \rangle}_2$, \ldots, $X^{\langle \phi \rangle}_{N_{\phi}}$
be the unions of the components of each of the orbits. For $x\in X^{\langle \phi \rangle}$,
let the shift $F_x^{\phi}$ be defined as $\sum_{i=1}^k \varphi_i F_x^{\phi(e_i)}$, where
$e_1,\ldots, e_k$ is the standard basis of $\Z^k$. 
For $1\leq \alpha_\phi \leq N_g$ let $F^{\phi}_{\alpha_\phi}$ be the  shift $F^{\phi}_{x}$ at a point of
$X^{\langle \phi\rangle}_{\alpha_\phi}$. One has:
$$[X; G_O,G_B]^{k, G_B}_{\underline{\varphi}}=
\sum_{[\phi] \in {\rm Hom\,}(\Z^k, G_O)/G_O}  \sum_{\alpha_\phi=1}^{N_\phi}  
[X^{\langle \phi \rangle}/ C_{G_O}(\phi)] \cdot  \LLL^{F^{\phi}_{\alpha_\phi}}\, .
$$
\end{remark}

\begin{theorem}\label{main-equiv}
Let $X$ be a (smooth) quasi-projective variety of dimension $d$ with
commuting actions of two finite groups $G_{O}$ and $G_{B}$. Then
\begin{equation*}
\sum_{n\ge 0}[X^n; (G_O)_n, G_B]_{\phiphi}^{k, G_B}\cdot t^n
=\left(\prod\limits_{r_1, \ldots,r_k\geq 1}\left(1-\LLL^{\Phi_k(\rr)d/2}\cdot t^{r_1r_2\cdots r_k}\right)^{r_2r_3^2\cdots r_k^{k-1}}\right)
^{-[X; G_0,G_B]_{\phiphi}^{k}}\,,
\end{equation*}
where 
$$\Phi_k(r_1,\ldots,r_k)=\varphi_1(r_1-1)+\varphi_2r_1(r_2-1)+\ldots+
\varphi_kr_1r_2\cdots r_{k-1}(r_k-1).$$
\end{theorem}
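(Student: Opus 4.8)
The plan is to follow the scheme of the proof of the (non-equivariant) motivic higher order Macdonald equation in \cite{G&Ph}, which itself adapts Tamanoi's argument \cite{T}, while carrying the commuting $G_B$-action through every step exactly as in the proof of Theorem~\ref{discrite-ver}. The whole point is that the geometry takes place in $K_0^{G_B}(\Var)[\LLL^s]_{s\in\Q}$: since the (diagonal) $G_B$-action on $X^n$ commutes with the wreath-product action of $(G_O)_n=G_O\wr S_n$, every space produced in the course of the argument---fixed-point sets, complements of diagonals, quotients by centralizers in $(G_O)_n$, and symmetric powers---is a $G_B$-variety, so each class identity can be read equivariantly and each exponentiation interpreted by the power structure over $K_0^{G_B}(\Var)[\LLL^s]_{s\in\Q}$.

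First I would use the iterated description of the invariant (the Remark preceding the theorem),
$$
[X^n;(G_O)_n,G_B]^{k,G_B}_{\phiphi}=\sum_{[\phi]\in{\rm Hom}(\Z^k,(G_O)_n)/(G_O)_n}\ \sum_{\alpha_\phi=1}^{N_\phi}\big[(X^n)^{\langle\phi\rangle}_{\alpha_\phi}/C_{(G_O)_n}(\phi)\big]\cdot\LLL^{F^{\phi}_{\alpha_\phi}}\,,
$$
and classify the conjugacy classes of homomorphisms $\phi\colon\Z^k\to G_O\wr S_n$ by the standard wreath-product combinatorics. Projecting $\phi$ to $S_n$ partitions $\{1,\dots,n\}$ into $\Z^k$-orbits; an orbit of type $\rr=(r_1,\dots,r_k)$ (recording the sizes of the successive cyclic quotients of the stabilizer filtration) together with its monodromy determines, up to conjugacy, a homomorphism $\Z^k\to G_O$, and the centralizer $C_{(G_O)_n}(\phi)$ factorizes as a product over orbits. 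This is exactly the bookkeeping that turns the sum into a product over $\rr$ and produces the exponents $r_2r_3^2\cdots r_k^{k-1}$.

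Next I would split the age $F^\phi$ into two parts. On the factor of $X^n$ attached to a single $\Z^k$-orbit of type $\rr$, the fixed-point set of $\phi$ is isomorphic to a fixed-point set on $X$ of the induced homomorphism, and the age decomposes as (i) the contribution of the induced $G_O$-action on $X$, weighted by $\phiphi$, which is precisely what is packaged into the exponent $[X;G_O,G_B]^{k}_{\phiphi}$, plus (ii) a purely permutational contribution coming from the cyclic structure of the orbit. Part (ii) is governed by the classical fact that a cyclic permutation of $r$ copies of $\C^d$ has age $(r-1)d/2$; iterating it through the nested cyclic quotients yields the coefficient $\Phi_k(\rr)d/2$ in the base of the product. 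To move these $\LLL$-weights correctly through the power-structure exponentiation I would invoke Proposition~\ref{prop1} and Proposition~\ref{prop2}: the former substitutes $t\mapsto\LLL^s t$ inside $(A(t))^{[M]}$, the latter identifies $\zeta_{\LLL[X]}(t)$ with $\zeta_{[X]}(\LLL t)$, so the factors $\LLL^{\Phi_k(\rr)d/2}$ land inside the base $1-\LLL^{\Phi_k(\rr)d/2}t^{r_1\cdots r_k}$ exactly as claimed.

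The argument is an induction on $k$. The base case $k=0$ is the equivariant Kapranov zeta function identity $\sum_n[S^n(X/G_O)]\,t^n=(1-t)^{-[X/G_O]}$ in $K_0^{G_B}(\Var)$ (its image under $\chi^{G_B}$ being Lemma~\ref{lemma1}), since $X^n/(G_O)_n=S^n(X/G_O)$. For the inductive step, the recursive definition of $[\,\cdot\,]^{k,G_B}_{\phiphi}$ reduces the order-$k$ invariant of $(X^n;(G_O)_n,G_B)$ to order-$(k-1)$ invariants of the fixed-point data $(X^{\langle g\rangle};C_{G_O}(g),G_B)$, to which the inductive hypothesis applies. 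I expect the main obstacle to be precisely this closing of the recursion over the extended equivariant ring: one must verify that, under the orbit decomposition of commuting tuples in the wreath product, the order-$(k-1)$ generating series assembled from the pieces reorganizes into the order-$k$ product over $\rr$ with the correct multiplicities and the correct accumulation of age weights---that is, that the combinatorial identity passing from $\Phi_{k-1}$ and $r_2r_3^2\cdots r_{k-1}^{k-2}$ to $\Phi_{k}$ and $r_2r_3^2\cdots r_k^{k-1}$ goes through verbatim. Because every intermediate manipulation is either a class identity between $G_B$-varieties or an application of the equivariant power-structure properties (Propositions~\ref{prop1}, \ref{prop2}), this verification is structurally identical to the one in \cite{G&Ph}, the only genuinely new input being the $G_B$-equivariance of each space that occurs.
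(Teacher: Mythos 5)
Your proposal follows essentially the same route as the paper's proof: the paper likewise reduces to the argument of \cite{G&Ph} (which adapts Tamanoi's wreath-product analysis, with the order-one case from \cite{GLMSteklov}), observes that every space arising in that argument is $G_B$-invariant so all identities hold in $K_0^{G_B}(\Var)[\LLL^s]_{s\in\Q}$, and invokes Propositions~\ref{prop1} and~\ref{prop2} to interpret the resulting generating series as powers in the equivariant power structure. Your expanded account of the wreath-product bookkeeping, the age decomposition yielding $\Phi_k(\rr)d/2$, and the induction on $k$ (starting at $k=0$ rather than the paper's $k=1$) is a correct fleshing-out of exactly the argument the paper cites.
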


\begin{proof}
The proof in \cite{G&Ph} was by induction started from $k=1,$ i.e. from the generalized orbifold case.
The latter one was treated in \cite{GLMSteklov}. One can easily see that the both proofs admit an action
of an additional group $G_B$, i.e. all the subspaces are $G_B$-invariants. In particular, symmetric products
participating in the proof of \cite{GLMSteklov} carry the natural action of $G_B$.
Using Propositions~\ref{prop1} and \ref{prop2}  
 the corresponding generating series can be written as exponents in terms of the power structure
over the modification  $K_0^{G_B}(\Var)[\LLL^s]_{s\in \Q}$ of the equivariant Grothendieck ring
of quasi-projective varieties.
\end{proof}


\begin{thebibliography}{15}

\bibitem{AS} M.~Atiyah, G.~Segal. 
On equivariant Euler characteristics. 
J. Geom. Phys. 6 (1989), no.4, 671--677. 

\bibitem{BatDais} V.~Batyrev, D.~Dais.
Strong McKay correspondence, string-theoretic Hodge numbers and mirror symmetry. 
Topology 35 (1996), no.4, 901--929.

\bibitem{BH1} P.~Berglund, T.~H\"ubsch. A generalized construction of
mirror manifolds. Nuclear Phys. B 393 (1993), 377--391.

\bibitem{BH2} P.~Berglund, M.~Henningson. Landau-Ginzburg orbifolds,
mirror symmetry and the elliptic genus. Nuclear Phys. B 433
(1995), 311--332.

\bibitem{bittner} F.~Bittner.
The universal Euler characteristic for varieties of characteristic zero.
Compos. Math. 140 (2004), no.4, 1011--1032.

\bibitem{Borisov} L.~Borisov. Class of the affine line is a zero divisor in the Grothendieck ring.
Preprint arXiv:1412.6194.

\bibitem{Handbook} S.~Bouc. Burnside rings. In: 
 Handbook of algebra. Vol. 2. Edited by M. Hazewinkel. North-Holland, Amsterdam, 2000,  741--804. 

\bibitem{BF} J.~Bryan, J.~Fulman.
Orbifold Euler characteristics and the number of commuting $m$-tuples in the symmetric groups. 
Ann. Comb. 2 (1998), no.1, 1--6. 

\bibitem{BrMor} J.~Bryan, A.~Morrison.
Motivic classes of commuting varieties via power structures.
J. Algebraic Geom. 24 (2015), no.1, 183--199. 

\bibitem{Vafa} L.~Dixon, J.~Harvey, C.~Vafa, E.~Witten.
Strings on orbifolds. I. Nuclear Phys. B 261 (1985), 678--686.

\bibitem{Vafa2} L.~Dixon, J.~Harvey, C.~Vafa, E.~Witten.
Strings on orbifolds. II. Nuclear Phys. B 274 (1986), 285--314.

\bibitem{Orbifoldzeta} W.~Ebeling, S.M.~Gusein-Zade.
Orbifold zeta functions for dual invertible polynomials.
To appear in Proc. Edinb. Math. Soc. (2).
ArXiv 1407.0154.

\bibitem{FLNU} T.~de Fernex, E.~Lupercio, T.~Nevins, B.~Uribe.
Stringy Chern classes of singular varieties. 
Adv. Math. 208 (2007), no.2, 597--621.

\bibitem{Gorsky} E.~Gorsky. Adams operations and power structures.
Mosc. Math. J. 9 (2009), no.2, 305--323. 

\bibitem{GLM-MRL} S.M.~Gusein-Zade, I.~Luengo, A.~Melle-Hern\'andez.
A power structure over the Grothendieck ring of varieties.
Math. Res. Lett. 11 (2004), 49--57.

\bibitem{Michigan} S.M.~Gusein-Zade, I.~Luengo, A.~Melle-Hern\'andez.
Power structure over the Grothendieck ring of varieties and generating series of Hilbert schemes of points.
Michigan Math. J. 54 (2006), no.2, 353--359.

\bibitem{GLMSteklov} S.M.~Gusein-Zade, I.~Luengo, A.~Melle-Hern\'andez.
On the power structure over the Grothendieck ring of varieties and its applications.
Proc. Steklov Inst. Math. 258 (2007), no.1, 53--64.

\bibitem{ArnoldJM} S.M.~Gusein-Zade, I.~Luengo, A.~Melle-Hern\'andez.
On an equivariant version of the zeta function of a transformation.
Arnold Math. J. 1 (2015), no.2, 127--140.

\bibitem{G&Ph} S.M.~Gusein-Zade, I.~Luengo, A.~Melle-Hern\'andez.
Higher order generalized Euler characteristics and generating series.
J. Geom. Phys. 95 (2015), 137--143.

\bibitem{Ito-Reid} Y.~Ito, M.~Reid.
The McKay correspondence for finite subgroups of ${\rm SL}(3,\C)$.
In: Higher-dimensional complex varieties (Trento, 1994), 221--240, de Gruyter, Berlin, 1996.

\bibitem{HH} F.~Hirzebruch, Th.~H\"ofer. 
On the Euler number of an orbifold.
Math. Ann. 286 (1990), no.1--3, 255--260. 

\bibitem{Kap} M.~Kapranov. The elliptic curve in the $S$-duality
theory and Eisenstein series for Kac--Moody groups. ArXiv math.AG/0001005.

\bibitem{mac} I.G.~Macdonald. The Poincar\'e polynomial of a
symmetric product, Proc. Cambridge Philos. Soc. 58 (1962), 563--568.

\bibitem{Mazur} J.~Mazur. Rationality of motivic zeta functions for curves with finite 
abelian group actions. J. Pure Appl. Algebra 217 (2013), no.7, 1335--1349. 

\bibitem{Serre} J.P.~Serre. Les espaces fibr\'es alg\'ebriques. In: Seminaire
C.~Chevalley, 2e ann\'ee: 1958. Anneaux de Chow et applications, 1-37.

\bibitem{St}
R.P.~Stanley. Enumerative Combinatorics. Vol.2. Cambridge Studies in Advanced Mathematics 62, 1999.

\bibitem{TtD} T.~tom Dieck. Transformation groups and representation theory.
Lecture Notes in Mathematics, 766, Springer, Berlin, 1979.

\bibitem{T} H.~Tamanoi. Generalized orbifold Euler
characteristic of symmetric products and equivariant Morava $K$-theory.
Algebr. Geom. Topol. 1 (2001), 115--141.

\bibitem{Wang} W.~Wang. Equivariant K-theory, wreath products, and Heisenberg algebra. 
Duke Math. J. 103 (2000), no.1, 1--23. 

\bibitem{WZ} W.~Wang, J.~Zhou. Orbifold Hodge numbers of wreath
product orbifolds. J. Geom. Phys. 38 (2001), 152--169.

\bibitem{Zaslow} E.~Zaslow, Topological orbifold models and quantum
cohomology rings. Commun. Math. Phys. 156 (1993), 301--331.
\end{thebibliography}
\end{document}